\documentclass[11pt]{article}
\usepackage{amssymb}
\usepackage{latexsym}
\usepackage{amsfonts}
\usepackage{amsmath}
\newtheorem{con0}{Theorem}[section]
\newtheorem{con1}[con0]{Condition}
\newtheorem{thrm}{Theorem}[section]
\newtheorem{lemma}[thrm]{Lemma}

\newtheorem{remark}[thrm]{Remark}

\numberwithin{equation}{section}
\usepackage[dvips]{color}
\usepackage{mathtools}
\mathtoolsset{showonlyrefs}
\usepackage{mathrsfs}
\usepackage{comment}
\usepackage{hyperref}
\usepackage{xcolor}

\def\proof{\noindent{\it Proof.~}}
\def\qed{\hfill$\square$\smallskip}

\def\bgcondition{\begin{con1}}\def\edcondition{\end{con1}}

\makeatletter
\begin{document}
	\allowdisplaybreaks

	\title{\Large \bf Small value probabilities of additive and derivative martingales in supercritical branching  Brownian motions and super Brownian motions}
\author{ \bf 
	Shukai Chen\footnote{The research of this author is supported by the National Key R\&D Program of
		China (No. 2022YFA1006003), NSFC grant of China (No. 12401167), Fujian
		Provincial Natural Science Foundation of China (No. 2024J08050), and the
		Education and Scientific Research Project for Young and Middle-aged Teachers in
		Fujian Province of China (No. JAT231015).
	}
	\hspace{1mm}\hspace{1mm} and \hspace{1mm}\hspace{1mm}
	Haojie Hou
	}
	\date{}
	\maketitle
	
	\begin{abstract}
		 In this paper, we establish asymptotics for the small value probabilities of additive and derivative martingales in both supercritical branching Brownian motions and super Brownian motions, thereby extending the corresponding results for Galton--Watson processes and continuous-state branching processes. For the derivative martingale in branching Brownian motion, our result also agrees with the findings in the arXiv version of Arguin et al. [arXiv:1008.4386 v1] and with those of Hu [Ann. Inst. H. Poincar\'e Probab. Stat., 2016].
	\end{abstract}
	
	\medskip
	
	\noindent\textbf{AMS 2020 Mathematics Subject Classification.}
	60J80; 60J68; 60F10.

	\medskip
	
	\noindent\textbf{Keywords and Phrases.}
	Branching Brownian motion; super Brownian motion; additive martingale; derivative martingale.

\section{Introduction and Main result}

The goal of this paper is to study the small value probabilities of certain martingales arising in supercritical branching Brownian motions and super Brownian motions. Let $\mathcal{B}_b^+(\mathbb{R})$ be the space of  non-negative bounded Borel functions on $\mathbb{R}$, and let $\mathcal{M}_F(\mathbb{R})$ be the space of finite measures on $\mathbb{R}$, equipped with the topology of weak convergence. For each $f \in \mathcal{B}_b^+(\mathbb{R})$ and $\mu \in \mathcal{M}_F(\mathbb{R})$, we write $\langle f, \mu\rangle$ for the integral of $f$ with respect to $\mu$.

\subsection{Branching Brownian motion}

A supercritical branching Brownian motion (BBM) is a continuous-time Markov branching process defined as follows. Initially, a single particle is located at $x$ and moves according to a standard Brownian motion $((B_t)_{t \geq 0}, \mathbf{P}_x)$. After an independent exponential time with parameter $\beta > 0$, the particle dies and gives birth to $k$ offspring with probability $p_k$. The offspring obey the same branching rule and evolve independently from their birth sites. The law of this process is denoted by $\mathbb{P}_x$ and we set $\mathbb{P}:= \mathbb{P}_0$ for simplicity. For each $t > 0$, let $Z_t$ be the counting measure formed by all particles alive at time 
$t$.
The so-called generating function is defined by
$$
f(s)=\sum_{k=0}^\infty p_k s^k,\quad |s|\le1.
$$
Let $q_c$ be the smallest root of $f(s)=s$ for $s\in[0,1]$. In this paper, we focus on the supercritical case, i.e., $m := \sum_{k=0}^\infty k p_k \in (1, \infty)$. In this case, it is well known that $q_c\in(0,1)$.
For each $\lambda \in \mathbb{R}$, define the additive martingale
\[
W_t(\lambda) := e^{-(\beta(m-1) + \frac{1}{2}\lambda^2)t} \langle e^{-\lambda \cdot}, Z_t \rangle.
\]
The convergence of $W_t(\lambda)$ has been widely studied over the past several decades; see, for example, \cite{Biggins, Chauvin,  Kingman, K04, Neveu}. 
We introduce the following assumption:
\bgcondition\label{cond1}
$|\lambda|< \sqrt{2\beta (m-1)} \ \mbox{and}\  \sum_{k=1}^\infty k(\log k) p_k<\infty. $
\edcondition

\noindent
As summarized in Kyprianou \cite[Theorem 1]{K04}, $W_t(\lambda)$ converges in $L^1(\mathbb{P})$ to a random variable $W_\infty(\lambda)$ if and only if Condition \ref{cond1} holds.

For $\lambda\in \mathbb{R}$, define
the derivative martingale
\[
\partial W_t(\lambda) := e^{-(\beta(m-1) + \frac{1}{2}\lambda^2)t} \langle (\cdot + \lambda t) e^{-\lambda \cdot}, Z_t \rangle.
\]
When $|\lambda| \geq \sqrt{2\beta(m-1)}$, Kyprianou \cite{K04} and Ren and Yang \cite{YR11} proved that $\partial W_t(\lambda)$ converges $\mathbb{P}$-almost surely to a non-negative limit $\partial W_\infty(\lambda)$, and the limit is non-trivial if and only if 
\bgcondition\label{cond2}
$|\lambda|= \sqrt{2\beta (m-1)} \  \mbox{and}\ \sum_{k=1}^\infty k(\log k)^2 p_k<\infty. $
\edcondition

\noindent
For $|\lambda| < \sqrt{2\beta(m-1)}$, it remains open (see \cite[Remark 4]{K04}) whether there exists a necessary and sufficient condition for the $L^1(\mathbb{P})$-convergence of $\partial W_t(\lambda)$. A sufficient condition 
in this regime
for the $L^1(\mathbb{P})$-convergence is $\sum_{k=1}^\infty k (\log k)^{\alpha} p_k < \infty$ for some $\alpha > 3/2$; see Hou et al. \cite[Proposition 2.6]{HRS2024}.

We now 
recall some known results on the small value probabilities of the aforementioned martingales.
Since $W_t(\lambda) \stackrel{\mathrm{d}}{=} W_t(-\lambda)$ and $\partial W_t(\lambda) \stackrel{\mathrm{d}}{=} - \partial W_t(-\lambda)$ under $\mathbb{P}$ for $\lambda \in \mathbb{R}$, 
it suffices to consider the case $\lambda \geq 0$. When $\lambda = 0$, Harris \cite[Remark, p.~110]{Harris1963} (see also \cite{Dubuc, MO2008} for the discrete-time case) showed that
the limit
$\lim_{\varepsilon \to 0+} \varepsilon^{-\kappa_0} \mathbb{P}(W_\infty(0) \in (0, \varepsilon))$ exists for $\kappa_0 := (1 - f'(q_c))/(m - 1)$, where $q_c$ is the extinction probability. 
When $\lambda \in (0, \sqrt{2\beta(m-1)})$, Liu \cite[Theorem 2.4]{Liu2001} proved that $\mathbb{P}(W_\infty(\lambda)
\in (0,\varepsilon)
) \leq C \varepsilon^a$ for some constants $C, a > 0$ as $\varepsilon \to 0+$.

For the derivative martingale $\partial W_t(\lambda)$, 
when $\lambda \in (0,\sqrt{2\beta(m-1)})$, since  $\partial W_t(\lambda)$
is a signed 
martingale  in this case, it is more natural to consider the left tail of $\partial W_\infty(\lambda)$ and  
the asymptotic behavior of $\mathbb{P}
(\partial W_\infty(\lambda)< -x)$ as $x \to +\infty$ was given by Chen et al. \cite[Theorem 1.1]{CHM2025} for the branching Wiener process. For the case $\lambda = \sqrt{2\beta(m-1)}$, Hu \cite[Theorem 1.3]{Hu2016} proved that $\mathbb{P}(\partial W_\infty(\sqrt{2\beta(m-1)}) \in (0, \varepsilon)) \asymp \varepsilon^{\kappa_1}$ as $\varepsilon \to 0+$ for  some $\kappa_1 > 0$. In an arXiv version of Arguin et al. \cite[Proposition A.1]{ABK10}, using a PDE argument, they proved that when $p_2 = 1$
and $\beta=1$, 
$
\mathbb{P}
(\partial W_\infty(\sqrt{2\beta(m-1)}) \in (0, \varepsilon)) \sim C_1 \varepsilon^{\kappa_1}$ for some $C_1 > 0$
and $\kappa_1= \sqrt{2}-1$.

Related small values probabilities have also been studied for other models. For the branching process in the B\"ottcher case, see  \cite{BB1993, FW2009, Hambly}; for branching process with immigration, see \cite{CLR2014, Sidorova}; for multitype branching process case, see \cite{Chu2014, Jones}.

Our main goal in the BBM setting is to establish precise asymptotic behavior for $\mathbb{P}(W_\infty(\lambda) \in (0, \varepsilon))$ when $\lambda \in (0, \sqrt{2\beta(m-1)})$, and for $\mathbb{P}(\partial W_\infty(\lambda) \in (0, \varepsilon))$ when $\lambda = \sqrt{2\beta(m-1)}$, as $\varepsilon \to 0+$.

\subsection{Super Brownian motion}
We next introduce the model of super Brownian motion. Let $\psi$ be a function of the form:
\begin{equation}\label{branching mechan}
		\psi(\lambda) = -\alpha \lambda + 
        \beta_0\lambda^2 + \int_0^\infty \left(e^{-\lambda y} - 1 + \lambda y\right) \nu(\mathrm{d}y), \quad \lambda \ge 0,
\end{equation}
where $\alpha > 0$, $
\beta_0\ge 0$, and $\nu$ is a $\sigma$-finite measure satisfying
\[
\int_0^\infty (y^2 \wedge y) \, \nu(\mathrm{d}y) < \infty.
\]
The function $\psi$ is called a {\it branching mechanism}. Our standard assumption is the following Grey's condition:
\begin{align}\label{Grey's-condition}
	\psi(+\infty) = +\infty \quad \text{and} \quad \int^\infty \frac{1}{\psi(\xi)} \, \mathrm{d}\xi < \infty.
\end{align}
Since $\psi$ is strictly convex with $\psi(0) = 0$ and $\psi'(0+) < 0$, there are exactly two roots in $[0, \infty)$: one is $0$, and the other, denoted by $\lambda^*$, is strictly positive.  
The super Brownian motion $X = ((X_t)_{t \geq 0}, \mathbb{P}_\mu)$ for each $\mu \in \mathcal{M}_F(\mathbb{R})$ with branching mechanism $\psi$ is an $\mathcal{M}_F(\mathbb{R})$-valued Markov process such that for all $\phi \in \mathcal{B}_b^+(\mathbb{R})$ and $t \geq 0$,
\[
-\log \mathbb{E}_{\mu}\left(e^{-\langle \phi, X_t \rangle}\right) = \langle u_{\phi}(t, \cdot), \mu \rangle, \quad \mu \in \mathcal{M}_F(\mathbb{R}),
\]
where $u_{\phi}(t, x)$ is the unique positive solution to the equation
\[
u_{\phi}(t, x) + \mathbf{E}_x \int_0^t \psi(u_{\phi}(t - s, B_s)) \, \mathrm{d}s = \mathbf{E}_x(\phi(B_t)).
\]
Note that the integral equation is equivalent to the following PDE:
	\begin{equation*} 
		\frac{\partial}{\partial t} u_{\phi}(t, x) - \frac{1}{2} \frac{\partial^2}{\partial x^2} u_{\phi}(t, x) = -\psi(u_{\phi}(t, x)), \quad t > 0, \; x \in \mathbb{R},
\end{equation*} 
with initial condition $u_{\phi}(0, x) = \phi(x)$.

We denote $\mathbb{P}:=\mathbb{P}_{\delta_0}$ for simplicity. For each $\lambda\in \mathbb{R}$, the additive martingale and derivative martingale in super Brownian motion are defined by
\[
W_t^X(\lambda) := e^{-(\alpha + \frac{1}{2}\lambda^2)t} \langle e^{-\lambda \cdot}, X_t \rangle
\quad \text{and} \quad
\partial W_t^X(\lambda) := e^{-(\alpha + \frac{1}{2}\lambda^2)t} \langle (\cdot + \lambda t) e^{-\lambda \cdot}, X_t \rangle.
\]
According to Kyprianou et al. \cite{KLMR12},  $W_t^X(\lambda)$ converges in $L^1(\mathbb{P})$ to some random variable $W_\infty^X(\lambda)$ if and only if
\bgcondition\label{cond3}
$|\lambda| < \sqrt{2\alpha}$ and $\displaystyle \int_{[1,\infty)} r (\log r) \, \nu(\mathrm{d}r) < \infty$.
\edcondition

\noindent
Also, for the convergence of the derivative martingale, we introduce the following condition
\bgcondition\label{cond2.5}
$\displaystyle \int^\infty \frac{1}{\sqrt{\int_{\lambda^*}^\xi \psi(u) \, \mathrm{d}u}} \, \mathrm{d}\xi < \infty$.
\edcondition

\noindent
Obviously, Condition \ref{cond2.5} is stronger than \eqref{Grey's-condition}.
For $|\lambda| \geq \sqrt{2\alpha}$, $\partial W_t^X(\lambda)$ converges $\mathbb{P}$-almost surely to a non-negative limit $\partial W_\infty^X(\lambda)$, and the limit is non-degenerate if and only if
\bgcondition\label{cond4}
$|\lambda| = \sqrt{2\alpha}$ and $\displaystyle \int_{[1,\infty)} r (\log r)^2 \, \nu(\mathrm{d}r) < \infty$.
\edcondition

\noindent
When $\lambda = 0$, Bingham \cite{Bingham} proved that the limit $\lim_{\varepsilon \to 0+} \varepsilon^{-\rho} \mathbb{P}_{\delta_0}(W_\infty^X(0) \in (0, \varepsilon))$ exists for $\rho = \psi'(\lambda^*)/\alpha$. For continuous-state branching process with immigration, see \cite{CLR2012}. 
To the best of our knowledge, the small value probabilities of $W_\infty^X(\lambda)$ for $\lambda \in (0, \sqrt{2\alpha})$ and of $\partial W_\infty^X(\sqrt{2\alpha})$ remain unknown, and this constitutes another main goal of our paper.

\subsection{Main result}

For branching Brownian motion, set
\begin{align*}	
	M_\infty(\lambda) := 
	\begin{cases}
		W_\infty(\lambda), & \lambda \in (0, \sqrt{2\beta(m-1)}), \\[4pt]
		\partial W_\infty(\sqrt{2\beta(m-1)}), & \lambda = \sqrt{2\beta(m-1)}.
	\end{cases}
\end{align*}
Similarly, for super Brownian motion, set
\begin{align*}	
	M_\infty^X(\lambda) := 
	\begin{cases}
		W_\infty^X(\lambda), & \lambda \in (0, \sqrt{2\alpha}), \\[4pt]
		\partial W_\infty^X(\sqrt{2\alpha}), & \lambda = \sqrt{2\alpha}.
	\end{cases}
\end{align*}
Our main result is as follows. 
\begin{thrm}\label{main result}
(i) Let $\lambda \in (0, \sqrt{2\beta(m-1)}]$. Assume either Condition \ref{cond1} or Condition \ref{cond2} holds. Then
\[
\lim_{\varepsilon \to 0+} \varepsilon^{-\rho} \, \mathbb{P}\bigl( M_\infty(\lambda) \in (0, \varepsilon) \bigr) \quad \text{exists and lies in } (0, \infty),
\]
where
\[
\rho := \frac{1}{\lambda} \left( \sqrt{ \left( \frac{\lambda}{2} + \frac{\beta(m-1)}{\lambda} \right)^2 + 2\beta \bigl(1 - f'(q_c)\bigr) } - \left( \frac{\lambda}{2} + \frac{\beta(m-1)}{\lambda} \right) \right).
\]

(ii) Let $\lambda \in (0, \sqrt{2\alpha}]$. Assume Condition \ref{cond2.5} holds and either Condition \ref{cond3} or Condition \ref{cond4} holds. Then
\[
\lim_{\varepsilon \to 0+} \varepsilon^{-\rho^X} \, \mathbb{P}\bigl( M_\infty^X(\lambda) \in (0, \varepsilon) \bigr) \quad \text{exists and lies in } (0, \infty),
\]
where
\[
\rho^X := \frac{1}{\lambda} \left( \sqrt{ \left( \frac{\lambda}{2} + \frac{\alpha}{\lambda} \right)^2 + 2 \psi'(\lambda^*) } - \left( \frac{\lambda}{2} + \frac{\alpha}{\lambda} \right) \right).
\]	
\end{thrm}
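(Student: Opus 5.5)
The plan is to convert the small value problem into the asymptotics of a travelling wave ODE near its stable end, and then apply a Tauberian theorem. Take the BBM case first. By the branching property and translation invariance, $M_\infty(\lambda)$ under $\mathbb{P}_x$ has the law of $e^{-\lambda x}M_\infty(\lambda)$ under $\mathbb{P}$. In the derivative case there is an extra additive term $x e^{-\lambda x}W_\infty(\lambda)$, but $W_\infty(\sqrt{2\beta(m-1)})=0$, so the same scaling holds. Set
\[
v(x):=\mathbb{E}\bigl[\exp\{-e^{\lambda x}M_\infty(\lambda)\}\bigr],\qquad w(x):=v(x)-q_c=\mathbb{E}\bigl[e^{-e^{\lambda x}M_\infty(\lambda)};\,M_\infty(\lambda)>0\bigr].
\]
The second identity uses $\{M_\infty(\lambda)=0\}=\{\text{extinction}\}$ a.s. under Condition \ref{cond1} or \ref{cond2}. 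Decomposing at the first branching time and using the martingale property, $v$ is a travelling wave. It solves
\[
\tfrac12 v''+c\,v'+\beta\bigl(f(v)-v\bigr)=0,\qquad c:=\frac{\lambda}{2}+\frac{\beta(m-1)}{\lambda},
\]
with $v(-\infty)=1$ and $v(+\infty)=q_c$. When $\lambda=\sqrt{2\beta(m-1)}$ this is the critical speed $c=\lambda$. Since $e^{\lambda x}=\theta$, the claim $\mathbb{E}[e^{-\theta M};M>0]\sim K\theta^{-\rho}$ as $\theta\to\infty$ is equivalent to $w(x)\sim Ke^{-\lambda\rho x}$ as $x\to+\infty$. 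By Karamata's Tauberian theorem applied to the measure $\mathbb{P}(M\in\cdot\,,M>0)$ on $(0,\infty)$, this gives $\mathbb{P}(M\in(0,\varepsilon))\sim K\varepsilon^{\rho}/\Gamma(1+\rho)$.

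The core step is the exact exponential asymptotics of $w$ at $+\infty$. Linearising at $q_c$ gives
\[
\tfrac12 w''+c w'-\beta(1-f'(q_c))w=\beta\,R(w),\qquad R(w)=O(w^2).
\]
The characteristic roots are $-c\pm\sqrt{c^2+2\beta(1-f'(q_c))}$, and both are negative. The slow one is $-\gamma$ with $\gamma=\sqrt{c^2+2\beta(1-f'(q_c))}-c$. One checks that $\gamma/\lambda=\rho$: indeed $\lambda\rho$ solves $(\lambda\rho)^2/2+c\lambda\rho=\beta(1-f'(q_c))$.

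First I will show that $w$ decays exponentially. Then I will apply variation of constants, or Levinson's theorem for perturbed linear systems, with the integrable perturbation $R(w)/w=O(w)$. Since the two roots are distinct there are no polynomial corrections. This yields $w(x)=Ke^{-\gamma x}(1+o(1))+K'e^{-(c+\sqrt{\cdot})x}(1+o(1))$, with $K\ge0$. The exponential decay of $w$ can be obtained from monotonicity of $v$ together with $w'<0$ and a comparison argument, or directly from the probabilistic bound of Liu \cite{Liu2001}.

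The main obstacle is proving $K>0$, that is, ruling out that $w$ follows only the fast mode. For this I would prove a matching probabilistic lower bound $\mathbb{P}(M\in(0,\varepsilon))\ge c_0\varepsilon^{\rho}$. Write $q_c$-thinning: in the skeleton (backbone) decomposition, a surviving process is a single immortal line that, until the first "prolific" branching, has only doomed subtrees attached. These doomed births occur at total rate $\beta(1-f'(q_c))$ along the spine relative to a rate involving $\beta(m-1)$. Consider the event that the spine, driven with the appropriate drift, travels to level $y=\lambda^{-1}\log(1/\varepsilon)$ before its first prolific branching. Optimising over the drift with Cameron--Martin then reproduces exactly the exponent $\gamma y=\rho\log(1/\varepsilon)$. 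On this event $M$ is of order $e^{-\lambda y}$ times an independent copy of $M$ conditioned on survival, so the bound follows.

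If the direct bound proves delicate, an alternative is a phase plane argument. The fast-mode solution is the unique strong-stable trajectory, and one can show that the probabilistic $v$ cannot coincide with it, for instance by comparing with the solution at $c$ perturbed.

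The super Brownian motion case follows the same scheme with $v(x):=-\log\mathbb{E}[\exp\{-e^{\lambda x}M^X_\infty(\lambda)\}]$. This solves
\[
\tfrac12v''+cv'-\psi(v)=0,\qquad c=\frac{\lambda}{2}+\frac{\alpha}{\lambda},
\]
with $v(+\infty)=\lambda^*$, because the extinction probability is $e^{-\lambda^*}$. The linearisation at $\lambda^*$ has coefficient $\psi'(\lambda^*)>0$, which gives $\rho^X$. Then $\mathbb{E}[e^{-\theta M};M>0]=e^{-v}-e^{-\lambda^*}\sim e^{-\lambda^*}(\lambda^*-v)$, and the Tauberian step is unchanged. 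Condition \ref{cond2.5}, together with Grey's condition, is used to ensure that $v$ is a finite, well-behaved wave, i.e. that $\psi$ has enough growth for the $-\infty$ end and that the ODE analysis of the $+\infty$ end is legitimate. The positivity of $K$ uses the backbone decomposition of the superprocess in place of the BBM skeleton.
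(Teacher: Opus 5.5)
\textbf{The error in your core step.} Your $v(x)=\mathbb{E}[\exp\{-e^{\lambda x}M_\infty(\lambda)\}]$ is the paper's $\Phi_v(-x)$, so it solves $\tfrac12 v''-c\,v'+\beta(f(v)-v)=0$. The drift is $-c$, not $+c$.

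More importantly, the sign does not rescue your root analysis. For either sign, the linearisation at $q_c$ has characteristic polynomial $\tfrac12 r^2\mp c\,r-\beta(1-f'(q_c))$. Its roots have product $-2\beta(1-f'(q_c))<0$, so they have opposite signs, and your claim that both are negative is false.
\begin{itemize}
\item With your sign, $\gamma$ is the \emph{positive} root; your check $\tfrac12(\lambda\rho)^2+c\lambda\rho=\beta(1-f'(q_c))$ verifies exactly that. The only decaying root would then be $-(c+\sqrt{c^2+2\beta(1-f'(q_c))})$, which gives the wrong exponent.
\item With the correct sign, the roots are $-\gamma$ and $c+\sqrt{c^2+2\beta(1-f'(q_c))}>0$.
\end{itemize}

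So $q_c$ is a saddle, not a stable node, and there is no fast decaying mode. Since $w\to 0$, Levinson's theorem (with the integrable perturbation $O(w)$) forces the solution onto the one-dimensional family spanned by the solution asymptotic to $e^{-\gamma x}$. Hence $w(x)=Ke^{-\gamma x}(1+o(1))$ with $K\neq 0$, and $K>0$ because $w>0$. The same holds verbatim for SBM, with $\psi'(\lambda^*)$ in place of $\beta(1-f'(q_c))$.

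Your ``main obstacle'' therefore does not exist. The skeleton lower bound and the strong-stable-trajectory argument address the node picture, which belongs to the other end of the wave ($v\to1$); they are unnecessary here. The skeleton heuristic does reproduce the right exponent: a drift-$c$ Brownian motion reaches level $y$ before an independent rate-$\beta(1-f'(q_c))$ clock rings with probability $e^{-\gamma y}$. But making it rigorous would cost far more than the step it replaces.

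One further point: Liu's bound only concerns the additive martingale of BBM. For the derivative martingale and for SBM, the a priori exponential decay has to come from your comparison argument or from the stable manifold theorem.

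\textbf{Comparison with the paper.} Once the sign is fixed, your route (a priori decay, then Levinson, then Karamata) is a valid alternative to the paper's, which never linearises.
\begin{itemize}
\item The paper writes $G(z)=G'(0+)z+zR(z)$ and uses convexity of $G$ to get $R$ increasing with $0\le R(z)\le Kz$.
\item With $Q:=w/(1-q_c)$ (the paper's $Q_v$) and $\theta_0=\sqrt{c^2+2|G'(0+)|}$, so that $\theta_0-c=\gamma$, Feynman--Kac plus a Girsanov change to drift $-\theta_0$ gives
\[
Q(x)e^{\gamma x}=Q(0)\,\mathbf E_x^{-\theta_0}\Bigl[\exp\Bigl\{\int_0^{\tau_0}R(Q(B_s))\,\mathrm ds\Bigr\}\Bigr].
\]
\item Because $R\ge0$, the right-hand side is nondecreasing in $x$, so positivity of the limit (your $K>0$) is immediate.
\item Finiteness comes from a crude bound $Q(x)\le e^{-c_1x}$, read off the same representation, followed by a doubling argument along the scales $2^n$.
\end{itemize}

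The paper's argument is self-contained and produces the decay estimate it needs along the way, but it relies on convexity of $G$. The ODE route needs only that $G(z)-G'(0+)z=O(z^2)$ near $0$ with $G'(0+)<0$. It also makes transparent that $K>0$ is forced by the saddle structure rather than by any probabilistic input.
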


\begin{remark}
Since
\[
\rho = \frac{2\beta (1 - f'(q_c))}{
	\sqrt{ (\lambda^2/2 + \beta(m-1))^2 + 2\beta (1 - f'(q_c)) \lambda^2 } + \lambda^2/2 + \beta(m-1)
}
\]
and
\[
\rho^X = \frac{2 \psi'(\lambda^*)}{
	\sqrt{ (\lambda^2/2 + \alpha)^2 + 2 \psi'(\lambda^*) \lambda^2 } + \lambda^2/2 + \alpha},
\]
letting $\lambda \to 0+$, we obtain
\[
\rho \to \frac{1 - f'(q_c)}{m - 1}, \qquad
\rho^X \to \frac{\psi'(\lambda^*)}{\alpha}.
\]
These limits coincide with the classical results of Harris \cite{Harris1963} for Galton--Watson processes and Bingham \cite{Bingham} for continuous-state branching processes, respectively.	
\end{remark}

\section{Proof of the main result}

Our proof strategy is based on the following well-known result (see, for example, \cite[Theorem 1.7.1', p.~38]{BGT}): for any positive random variable $X$ and $\rho \geq 0$,
\begin{align}\label{Lap-small}
	\lim_{\theta \to +\infty} \theta^\rho \, \mathbb{E}(e^{-\theta X}) = C \in (0, \infty) 
	\quad \Longleftrightarrow \quad
	\lim_{\varepsilon \to 0+} \varepsilon^{-\rho} \, \mathbb{P}(X \leq \varepsilon) = \frac{C}{\Gamma(1+\rho)}.
\end{align}

We will prove the two cases together. Define
\[
\gamma:= \beta(m-1)\ \mbox{in BBM,} \quad \mbox{and}\quad 
\gamma := \alpha\ \mbox{in SBM}.
\]
Set $v := \lambda/2 + \gamma/\lambda$. 

For BBM, 
recall that $q_c \in [0,1)$ is
the extinction probability; then $f(q_c) = q_c$. From \cite[Theorem 1.2]{BK1997} (for additive martingale) and \cite[Proposition A.3(iii)]{Aidekon} (for derivative martingale), we see that $\mathbb{P}(M_\infty(\lambda) = 0) = q_c$.  
It was shown in \cite{K04}  that for $\lambda \in (0, \sqrt{2\gamma}]$, the function $\Phi_v(x) := \mathbb{E}\bigl(\exp\{-e^{-\lambda x} M_\infty(\lambda)\}\bigr)$ solves the following ordinary differential equation:
\begin{align*}
	\frac{1}{2}\Phi_v'' + v \Phi_v' + \beta(f(\Phi_v) - \Phi_v) = 0
, \quad  \Phi_v(-\infty)=q_c,\quad \Phi_v(+\infty)=1.
\end{align*}
A direct calculation gives
\begin{align}\label{Laplace-BBM}
	\mathbb{E}\left(\exp\left\{-e^{-\lambda x} M_\infty(\lambda)\right\} \,\Big|\, M_\infty(\lambda) > 0\right) = \frac{\Phi_v(x) - q_c}{1 - q_c}
	.
\end{align}

Define
\[
Q_v(x) := \frac{\Phi_v(-x) - q_c}{1 - q_c}.
\]
Then $Q_v$ is decreasing in $x$ with $Q_v(+\infty) = 0$, $Q_v(-\infty) = 1$, and $Q_v$ satisfies
\begin{align}\label{ODE2}
	\frac{1}{2} Q_v'' - v Q_v' + \beta\left( \frac{f((1 - q_c)Q_v + q_c) - q_c}{1 - q_c} - Q_v \right) = 0.
\end{align}

For SBM, by Kyprianou et al. \cite{KLMR12}, for each $\lambda \in (0, \sqrt{2\gamma}]$, the function
\[
\Phi_v(x) := -\log \mathbb{E}\left(\exp\left\{-e^{-\lambda x} M_\infty^X(\lambda)\right\}\right)
\]
solves the equation
\begin{align*}
	\frac{1}{2}\Phi_v'' + v \Phi_v' - \psi(\Phi_v) = 0.
\end{align*}
Moreover, $\Phi_v$ is decreasing with $\Phi_v(-\infty) = \lambda^*$ and $\Phi_v(+\infty) = 0$. 
Observe that as $x \to -\infty$, 
\begin{align}\label{Laplace-SBM}
	\mathbb{E}
	\left(\exp\left\{-e^{-\lambda x} M_\infty^X(\lambda)\right\} \,\Big|\, M_\infty^X(\lambda) > 0\right)
	& = \frac{e^{-\lambda^*}\left(e^{\lambda^* - \Phi_v(x)} - 1\right)}{1 - e^{-\lambda^*}} \nonumber\\
	&\sim \frac{e^{-\lambda^*}\bigl(\lambda^* - \Phi_v(x)\bigr)}{1 - e^{-\lambda^*}}
	.
\end{align}

Define
\[
Q_v(x) := 1 - \frac{\Phi_v(-x)}{\lambda^*}
.
\]
Then
 $Q_v$ is decreasing in $x$ with $Q_v(+\infty) = 0$, $Q_v(-\infty) = 1$, and $Q_v$ satisfies
\begin{align}\label{ODE2'}
	\frac{1}{2} Q_v'' - v Q_v' + 
	\frac{1}{\lambda^*}\psi(\lambda^*(1-Q_v))= 0.
\end{align}

For $z\in [0,1]$, set 
\begin{align*}
	G(z) :=  
	\left\{
	\begin{aligned}
		&\beta\big( \frac{f((1-q_c)z+q_c)-q_c}{1-q_c} -z\big), &  \mbox{in BBM},\\
		&\psi(\lambda^*(1-z))/\lambda^*, & \mbox{in SBM}.
	\end{aligned}
	\right.
\end{align*}
Combining \eqref{ODE2} and \eqref{ODE2'}, $Q_v$ is a decreasing function on $\mathbb{R}$ with $Q_v(+\infty) = 0$, $Q_v(-\infty) = 1$, and
\begin{align}\label{ODE3}
	\frac{1}{2} Q_v'' - v Q_v' + G(Q_v) = 0.
\end{align}
Note that $G(0) = 0$ and 
\begin{align}\label{Def-of-G-prime}
	G'(0+) = 
	\begin{cases}
		\beta\bigl(f'(q_c) - 1\bigr), & \text{in BBM},\\[4pt]
		-\psi'(\lambda^*), & \text{in SBM}.
	\end{cases}
\end{align}
Define
\begin{align}\label{Def-of-R}
	R(z) := 
	\begin{cases}
		0, & z = 0,\\[4pt]
		\dfrac{G(z) - z G'(0+)}{z}, & 0 < z \le 1.
	\end{cases}
\end{align}
\begin{lemma}\label{lem1}
The function $R$ is increasing on $[0,1]$. Moreover, there exists a constant $K > 0$ such that $0 \leq R(z) \leq K z$ for all $z \in [0,1]$.
\end{lemma}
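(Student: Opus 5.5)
The plan is to reduce both claims to convexity of $G$ together with a bound on its second derivative. Observe that $R(z) = G(z)/z - G'(0+)$ for $z\in(0,1]$. Since $G(0)=0$, the quotient $G(z)/z$ is the slope of the chord of $G$ from $0$ to $z$. If $G$ is convex on $[0,1]$, this slope is nondecreasing in $z$, and it tends to $G'(0+)$ as $z\to0+$. This gives both $R\ge 0$ and the monotonicity of $R$ on $(0,1]$. Since $R(0)=0$ and $R\ge0$ on $(0,1]$, monotonicity then extends to all of $[0,1]$.

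First I check convexity in each model. In BBM, $G(z)=\beta\bigl(\frac{f((1-q_c)z+q_c)-q_c}{1-q_c}-z\bigr)$, so for $z\in(0,1)$
\[
G''(z)=\beta(1-q_c)f''((1-q_c)z+q_c)\ge0,
\]
because $f$ is a power series with nonnegative coefficients. In SBM, $G(z)=\psi(\lambda^*(1-z))/\lambda^*$, so
\[
G''(z)=\lambda^*\psi''(\lambda^*(1-z))\ge0,
\]
because $\psi$ is convex (indeed $\psi''(u)=2\beta_0+\int y^2e^{-uy}\nu(\mathrm{d}y)\ge 0$). Moreover, $G'(0+)$ agrees with \eqref{Def-of-G-prime}: in BBM, $f'(q_c)$ is finite since $q_c<1$; in SBM, $\psi'(\lambda^*)$ is finite since $\lambda^*>0$.

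For the linear bound I would write
\[
R(z)=\frac1z\int_0^z\bigl(G'(s)-G'(0+)\bigr)\,\mathrm{d}s\le \frac1z\int_0^z s\sup_{[0,1]}G''\,\mathrm{d}s=\frac{z}{2}\sup_{[0,1]}G''.
\]
So it suffices to show that $G''$ is bounded on $[0,1]$. In BBM, $\sup_{[0,1]}G''=\beta(1-q_c)f''(1)$, and the issue is that $f''(1)=\sum k(k-1)p_k$ may be infinite; Condition \ref{cond1} only gives a $k\log k$ moment. In SBM, $\psi''(\lambda^*(1-z))$ blows up as $z\to1$ whenever $\nu$ has no second moment near $\infty$. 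This is the main obstacle.

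To get around it, I split $[0,1]$ at some $z_0\in(0,1)$. On $[0,z_0]$, $G''$ is bounded by its value at the endpoint. In BBM, $f''$ is finite and increasing on $[q_c,(1-q_c)z_0+q_c]$, which lies strictly inside $[0,1)$. In SBM, $\psi''$ is finite and decreasing on $[\lambda^*(1-z_0),\lambda^*]$, which lies in $(0,\infty)$. So the integral bound gives $R(z)\le K_1z$ there. On $[z_0,1]$, I use monotonicity instead: $R(z)\le R(1)$, and $R(1)=G(1)-G'(0+)$ is finite. Hence on this interval $R(z)\le (R(1)/z_0)\,z$. Taking $K=\max(K_1,R(1)/z_0)$ completes the proof.
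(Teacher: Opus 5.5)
Your proof is correct and is essentially the paper's argument. Convexity of $G$ makes the chord slope $G(z)/z$ nondecreasing, which gives monotonicity and $R\ge 0$. The estimate $R(z)\le \frac{z}{2}\sup G''$ is then used only on an initial interval $[0,z_0]$, where $G''$ is finite and monotone (the paper takes $z_0=1/2$ and uses $G'''\ge 0$), with a crude bound on $[z_0,1]$.

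The differences are cosmetic. On $[z_0,1]$ you use $R(z)\le R(1)$, whereas the paper uses $R(z)\le 2\sup_{[0,1]}|G|+|G'(0+)|$. Noting that $G''>0$ on $(0,1)$ (since $m>1$, resp.\ $\psi$ is not linear) would upgrade your conclusion to the strict convexity and strict monotonicity that the paper invokes later.
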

\proof 
According to the definition of $G$, we have
\begin{align*}
	G''(z) := 
	\begin{cases}
		\beta(1 - q_c) \displaystyle \sum_{k=2}^\infty k(k-1)p_k\bigl((1 - q_c)z + q_c\bigr)^{k-2}, & \text{in BBM},\\[8pt]
		2\beta_0\lambda^*+\lambda^* \displaystyle \int_0^\infty y^2 e^{-\lambda^*(1 - z)y} \, \nu(\mathrm{d}y), & \text{in SBM}.
	\end{cases}
\end{align*}
and 
\begin{align*}
	G'''(z) := 
	\begin{cases}
		\beta(1 - q_c)^2 \displaystyle \sum_{k=3}^\infty k(k-1)(k-2)p_k \bigl((1 - q_c)z + q_c\bigr)^{k-3}, & \text{in BBM},\\[8pt]
		(\lambda^*)^2 \displaystyle \int_0^\infty y^3 e^{-\lambda^*(1 - z)y} \, \nu(\mathrm{d}y), & \text{in SBM}.
	\end{cases}
\end{align*}
Therefore, $0 < G''(z)$ for all $z \in [0,1]$, and $G''(z) \leq G''(1/2)$ for all $z \in [0, 1/2]$. This implies that for all $z \in (0, 1/2)$,
\begin{align}\label{e1}
	R(z) 
	&= \frac{1}{z} \int_0^z \bigl(G'(y) - G'(0+)\bigr) \, \mathrm{d}y 
	= \frac{1}{z} \int_0^z \int_0^y G''(u) \, \mathrm{d}u \, \mathrm{d}y \nonumber\\
	&\leq \frac{1}{z} \int_0^z \int_0^y G''(1/2) \, \mathrm{d}u \, \mathrm{d}y 
	= \frac{z}{2} G''(1/2).
\end{align}
Since $R(0) = 0$, the above inequality also holds for $z = 0$. For $z \in [1/2, 1]$, since $G$ is continuous on $[0,1]$, we have
\begin{align}\label{e2}
	R(z) \leq 2 \sup_{t \in [0,1]} |G(t)| + |G'(0+)| 
	\leq 2z \left( 2 \sup_{t \in [0,1]} |G(t)| + |G'(0+)| \right).
\end{align}
Combining \eqref{e1} and \eqref{e2} yields the second result.

For the first result, it follows from the fact that $G''(z) > 0$ that $G$ is strictly convex, which implies that for any $0 < y < z \leq 1$,
\begin{align*}
	R(z) 
	&= \frac{G(z)}{z} - G'(0+) 
	= \frac{1}{y} \left( \frac{y}{z} G(z) + \frac{z-y}{z} G(0) \right) - G'(0+) \nonumber\\
	&> \frac{G(y)}{y} - G'(0+) = R(y),
\end{align*}
as desired.
\qed

Recall that $\mathbf P_x$ denotes the law of a standard Brownian motion starting from $x$. 
For each $\theta\in \mathbb{R}$, let $\mathbf P_x^{-\theta}$ be the law of the Brownian motion with drift $-\theta$ starting from $x$. 
By the Feynman--Kac formula, for any $t > 0$ and $x > 0$, it follows from \eqref{ODE3} that
\begin{align}
	Q_v(x) 
	&= \mathbf E^{-v}_x \left( Q_v(B_t) \exp\left\{ \int_0^t R(Q_v(B_s)) \, \mathrm{d}s \right\} e^{G'(0+) t} \right) \nonumber\\
	&= \mathbf E^{-v}_x \left( Q_v(B_t) \exp\left\{ \int_0^t R(Q_v(B_s)) \, \mathrm{d}s \right\} e^{-|G'(0+)| t} \right),
\end{align}
where in the last equality we used the fact that $G'(0+) < 0$ by \eqref{Def-of-G-prime}.
Define
\begin{equation}\label{theta_0}
	\theta_0^2 := v^2 + 2 |G'(0+)| > v^2.
\end{equation}
It follows from Girsanov's theorem that, for $x > 0$,
\begin{align*}
	Q_v(x) 
	&= \mathbf E^{-\theta_0}_x
	\left( Q_v(B_t) \exp\left\{ \int_0^t R(Q_v(B_s)) \, \mathrm{d}s \right\} e^{(\theta_0 - v)(B_t - x)} \right) \nonumber\\
	&= e^{-(\theta_0 - v)x} \mathbf E^{-\theta_0}_x
	\left( Q_v(B_t) \exp\left\{ \int_0^t R(Q_v(B_s)) \, \mathrm{d}s \right\} e^{(\theta_0 - v) B_t} \right).
\end{align*}
Therefore, under $\mathbf{P}_x^{-\theta_0}$, the process
\[
Q_v(B_t) \exp\left\{ \int_0^t R(Q_v(B_s)) \, \mathrm{d}s \right\} e^{(\theta_0 - v) B_t}
\]
is a martingale. Define $\tau_y := \inf\{ t \ge 0 : B_t = y \}$. Then, by the optional stopping theorem, we obtain
\begin{align}\label{eq:2.9}
	Q_v(x) = e^{-(\theta_0 - v)x} \mathbf E^{-\theta_0}_x
	\left( Q_v(B_{t \wedge \tau_0}) \exp\left\{ \int_0^{t \wedge \tau_0} R(Q_v(B_s)) \, \mathrm{d}s \right\} e^{(\theta_0 - v) B_{t \wedge \tau_0}} \right).
\end{align}
Our next lemma states that one may replace $t \wedge \tau_0$ by $\tau_0$ in \eqref{eq:2.9}.

\begin{lemma}\label{le:2.1}
For any $x > 0$, we have
\[
Q_v(x) e^{(\theta_0 - v)x}
= Q_v(0) \, \mathbf E^{-\theta_0}_x
\left( \exp\left\{ \int_0^{\tau_0} R(Q_v(B_s)) \, \mathrm{d}s \right\} \right)
=: F_\lambda(x).
\]
\end{lemma}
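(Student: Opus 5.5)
We start from \eqref{eq:2.9} and let $t\to\infty$, splitting the expectation according to whether $\tau_0\le t$ or $\tau_0>t$. Under $\mathbf P^{-\theta_0}_x$ with $x>0$ and $\theta_0>0$, the process drifts to $-\infty$, so $\tau_0<\infty$ almost surely. On $\{\tau_0\le t\}$ we have $B_{t\wedge\tau_0}=0$, so the integrand equals $Q_v(0)\exp\{\int_0^{\tau_0}R(Q_v(B_s))\,\mathrm{d}s\}\mathbf 1_{\{\tau_0\le t\}}$. This increases in $t$ because $R\ge 0$ by Lemma \ref{lem1}. Monotone convergence therefore gives the limit $Q_v(0)\,\mathbf E^{-\theta_0}_x\bigl(\exp\{\int_0^{\tau_0}R(Q_v(B_s))\,\mathrm{d}s\}\bigr)$, which may a priori be $+\infty$. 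Finiteness will follow once we show the remaining term vanishes, because the left side $Q_v(x)e^{(\theta_0-v)x}$ is finite.

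On $\{\tau_0>t\}$, the integrand is $Q_v(B_t)e^{(\theta_0-v)B_t}\exp\{\int_0^tR(Q_v(B_s))\,\mathrm{d}s\}$ with $B_s>0$ for $s\le t$. We bound it in two steps.

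\emph{Step 1: a uniform bound on the Feynman--Kac weight.} Since $Q_v$ is decreasing and $B_s>0$, we have $Q_v(B_s)\le Q_v(0)<1$. Lemma \ref{lem1} then gives $R(Q_v(B_s))\le KQ_v(B_s)$. The key point is that $Q_v$ decays exponentially at $+\infty$, so the additive functional stays bounded along paths that drift away from $0$. To get this, we use the Feynman--Kac identity with the crude bound $R\le KQ_v(0)$. Alternatively, we linearize \eqref{ODE3} at $0$: the equation is $\frac12Q''-vQ'-|G'(0+)|Q+R(Q)Q=0$ with $R(Q)\to0$. A standard ODE comparison then shows that for every $\delta>0$ we have $Q_v(x)\le C_\delta e^{-(\theta_0-v-\delta)x}$, since $\theta_0-v$ is the positive root of $\frac12 r^2+vr-|G'(0+)|=0$ after the sign change. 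Hence $\int_0^\infty Q_v(y)\,\mathrm{d}y<\infty$.

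\emph{Step 2: the term on $\{\tau_0>t\}$ tends to zero.} Using Step 1, we can bound $Q_v(B_t)e^{(\theta_0-v)B_t}\le C_\delta e^{\delta B_t}$. The exponential functional is controlled as follows: by Hölder's inequality, or simply by the crude bound $\exp\{KQ_v(0)t\}$, combined with the estimate $\mathbf P^{-\theta_0}_x(\tau_0>t,\,B_t\in\mathrm{d}y)\lesssim e^{-\theta_0^2t/2}$ times Gaussian factors. Since $\theta_0^2/2>KQ_v(0)$ need not hold, the cleaner route is the following. Apply a Girsanov change back to drift $-(\theta_0-\delta)$ and bound $\int_0^t R(Q_v(B_s))\,\mathrm{d}s\le K\int_0^\infty Q_v(B_s)\,\mathrm{d}s$. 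For a Brownian motion with negative drift killed at $0$, the occupation density is bounded, so by Khas'minskii's lemma we get $\sup_x\mathbf E\exp\{K\int_0^{\tau_0} Q_v(B_s)\,\mathrm{d}s\}<\infty$, at least when $Q_v$ is small on $[a,\infty)$. The region $[0,a]$ is handled by the strong Markov property at $\tau_a$. Then the term on $\{\tau_0>t\}$ is at most $C\,\mathbf E^{-\theta_0}_x(e^{\delta B_t};\tau_0>t)^{1/p}$, which tends to $0$ since $B_t\to-\infty$ and the event $\{\tau_0>t\}$ has exponentially small probability.

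The main obstacle is Step 2, namely controlling $\exp\{\int_0^t R\}$ on the long excursion $\{\tau_0>t\}$. I expect to rely on the exponential decay of $Q_v$ from Step 1 together with Khas'minskii's lemma. Once this term vanishes, letting $t\to\infty$ in \eqref{eq:2.9} and multiplying by $e^{(\theta_0-v)x}$ yields the stated identity, and the finiteness of $F_\lambda(x)$ comes for free.
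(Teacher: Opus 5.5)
Your treatment of $\{\tau_0\le t\}$ by monotone convergence is fine. The ODE comparison in Step~1 also works: it does give $Q_v(x)\le C_\delta e^{-(\theta_0-v-\delta)x}$. The gap is in Step~2.

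\textbf{Where Step 2 fails.} Whether you use H\"older or the Girsanov change to drift $-(\theta_0-\delta)$, you need a finite exponential moment of the form $\mathbf E_a\exp\{c\int_0^{\tau_0}Q_v(B_s)\,\mathrm{d}s\}$ with $c\ge K$. You assert that the region $[0,a]$ ``is handled by the strong Markov property at $\tau_a$''. It is not. The strong Markov property at $\tau_a$ only splits off the part of the path before $\tau_a$, which Khas'minskii's lemma controls because $Q_v$ is small on $[a,\infty)$. The remaining factor, started at $a$, contains paths that cross $a$ many times and spend long stretches in $[0,a]$. There your majorant $KQ_v$ is of order $KQ_v(0)$, which, as you note yourself, may exceed $\theta_0^2/2$. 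Khas'minskii's lemma says nothing for a potential that is not small. Moreover, such an exponential moment is genuinely infinite when the potential stays above $\theta_0^2/2$ plus a Dirichlet eigenvalue on some interval. So with only $R(z)\le Kz$ from Lemma~\ref{lem1}, your argument does not close.

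\textbf{The missing idea.} You need a sharper bound on $R$, coming from the sign of $G$. Since $G$ is strictly convex with $G(0)=G(1)=0$, we have $G<0$ on $(0,1)$. Hence, for $z\in(0,1)$,
$R(z)=G(z)/z-G'(0+)<|G'(0+)|=\tfrac12\theta_0^2-\tfrac12 v^2$.

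With this, the crude bound you discarded works. On $\{\tau_0>t\}$ the integrand is at most $e^{R(Q_v(0))t}e^{(\theta_0-v)B_t}$. A Girsanov change back to driftless Brownian motion turns its expectation into $e^{\theta_0x}e^{(R(Q_v(0))-\theta_0^2/2)t}\,\mathbf E_x(e^{-vB_t}\mathbf 1_{\{t<\tau_0\}})$. This is at most $e^{\theta_0x}\mathbf P_x(\tau_0>t)$, which tends to $0$. That is the paper's proof, and it needs neither the exponential decay of $Q_v$ nor Khas'minskii.

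The same bound would also repair your route. For small $\delta$ one has $\sup_{(0,1)}R=|G'(0+)|<(\theta_0-\delta)^2/2$. So $\mathbf E^{-(\theta_0-\delta)}_a\exp\{\int_0^{\tau_0}R(Q_v(B_s))\,\mathrm{d}s\}$ is finite directly, by comparison with $\mathbf E^{-(\theta_0-\delta)}_a e^{|G'(0+)|\tau_0}$.
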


\proof
Recall from \eqref{ODE3} that $x \mapsto Q_v(x)$ is decreasing and bounded between $0$ and $1$.
Then, by Lemma \ref{lem1},
\begin{equation}\label{W(B)<W(0)}
	R(Q_v(B_s)) \le R(Q_v(0)) < R(1) 
	\quad \text{and} \quad 
	Q_v(B_t) < 1, \qquad \forall t \ge 0, \; \forall s \le \tau_0.
\end{equation}
Combining \eqref{theta_0}, \eqref{W(B)<W(0)}, and Girsanov's theorem, we obtain
\begin{align}\label{ineq:2.10}
	& \mathbf E^{-\theta_0}_x
	\left( Q_v(B_t) \exp\left\{ \int_0^{t} R(Q_v(B_s)) \, \mathrm{d}s \right\} e^{(\theta_0 - v) B_t} \mathbf{1}_{\{t < \tau_0\}} \right) \nonumber \\
	&\le e^{R(Q_v(0)) t} \, \mathbf E^{-\theta_0}_x
	\left( e^{(\theta_0 - v) B_t} \mathbf{1}_{\{t < \tau_0\}} \right) \nonumber \\
	&=
	e^{\theta_0x}
	e^{(R(Q_v(0)) - \frac{1}{2}\theta_0^2) t} \, \mathbf E_x
	\left( e^{-v B_t} \mathbf{1}_{\{t < \tau_0\}} \right) \nonumber \\
	&\le 
	e^{\theta_0x}
	e^{(R(Q_v(0)) - \frac{1}{2}\theta_0^2) t} \, \mathbf P_x(t < \tau_0).
\end{align}
Recall from the proof of Lemma \ref{lem1} that $G$ is strictly convex. Since $G(0) = G(1) = 0$, we have $G(z) < 0$ for all $z \in (0,1)$. Therefore, combining \eqref{Def-of-R} and \eqref{theta_0}, we obtain
\begin{equation}\label{ineq:2.13}
	R(Q_v(0)) - \frac{\theta_0^2}{2} < -G'(0+) - \frac{\theta_0^2}{2} = -\frac{v^2}{2}.
\end{equation}
Hence, \eqref{ineq:2.10} yields
\begin{equation*}
	\mathbf E^{-\theta_0}_x
	\left( Q_v(B_t) \exp\left\{ \int_0^{t} R(Q_v(B_s)) \, \mathrm{d}s \right\} e^{(\theta_0 - v) B_t} \mathbf{1}_{\{t < \tau_0\}} \right)
	\leq 
	e^{\theta_0x}
	\mathbf P_x(t < \tau_0).
\end{equation*}
As a consequence, it follows from \eqref{eq:2.9} that
\begin{align}\label{ineq:2.14}
	& \left| Q_v(x) e^{(\theta_0 - v)x} - Q_v(0) \, \mathbf E^{-\theta_0}_x \left[ \exp\left\{ \int_0^{\tau_0} R(Q_v(B_s)) \, \mathrm{d}s \right\} \mathbf{1}_{\{\tau_0 \leq t\}} \right] \right| \nonumber\\
	&
	\leq 
	e^{\theta_0x}
	\mathbf P_x(\tau_0 > t).
\end{align}
By the reflection principle for Brownian motion,
\[
\mathbf P_x(\tau_0 > t) = 1 - 2 \mathbf P_x(B_t \leq 0)
= 1 - \frac{\sqrt{2}}{\sqrt{\pi}} \int_{-\infty}^{-x/\sqrt{t}} e^{-z^2/2} \, \mathrm{d}z
\to 0
\]
as $t \to \infty$. Substituting this into \eqref{ineq:2.14} completes the proof of the lemma.
\qed

\bigskip 
Now we are ready to give the proof of Theorem \ref{main result}.
\bigskip 

\noindent{\it Proof of Theorem \ref{main result}}. 
{\bf Step 1}. 
Recall the definition of $F_\lambda$ from Lemma \ref{le:2.1}. In this step, we prove that
\begin{equation}\label{main eq}
	\lim_{x \to \infty} F_\lambda(x) =: F_\lambda(\infty) > 0, \qquad \forall \lambda \in (0, \sqrt{2\gamma}].
\end{equation}
Since the function $R$ is non-negative by Lemma \ref{lem1}, for any $x, y \ge 0$ and $\lambda \in (0, \sqrt{2\gamma}]$,
\begin{align}\label{ineq:2.17}
	F_\lambda(x + y)
	&= Q_v(0) \, \mathbf E_{x+y}^{-\theta_0} \left( \exp\left\{ \int_0^{\tau_0} R(Q_v(B_s)) \, \mathrm{d}s \right\} \right) \nonumber \\
	&\ge Q_v(0) \, \mathbf E_{x+y}^{-\theta_0} \left( \exp\left\{ \int_{\tau_y}^{\tau_0} R(Q_v(B_s)) \, \mathrm{d}s \right\} \right) \nonumber \\
	&= Q_v(0) \, \mathbf E_y^{-\theta_0} \left( \exp\left\{ \int_0^{\tau_0} R(Q_v(B_s)) \, \mathrm{d}s \right\} \right) = F_\lambda(y),
\end{align}
where in the second equality we used the strong Markov property. It then follows straightforwardly from \eqref{ineq:2.17} that, for all $x \ge 0$, $F_\lambda(x) \ge F_\lambda(0) = Q_v(0) > 0$, which implies \eqref{main eq}.

{\bf Step 2}. In this step, we prove that
\begin{equation}\label{main eq2}
	F_\lambda(\infty) < \infty, \qquad \forall \lambda \in (0, \sqrt{2\gamma}].
\end{equation}
Combining \eqref{eq:2.9}, \eqref{W(B)<W(0)}, and Girsanov's theorem, for any $x > 0$ and $t \ge 0$,
\begin{align*}
	Q_v(x)
	 e^{(\theta_0 - v)x}
	&= \mathbf E_x^{-\theta_0}
	\left[
	Q_v(B_{t \wedge \tau_0}) \exp\left\{ \int_0^{t \wedge \tau_0} R(Q_v(B_s)) \, \mathrm{d}s \right\}
	e^{(\theta_0 - v) B_{t \wedge \tau_0}}
	\right] \nonumber \\
	&\le \mathbf E_x
	\left[
	\exp\left\{ R(Q_v(0)) (t \wedge \tau_0) \right\}
	e^{(\theta_0 - v) B_{t \wedge \tau_0}}
	e^{-\theta_0 (B_{t \wedge \tau_0} - x) - \theta_0^2 (t \wedge \tau_0)/2}
	\right] \nonumber \\
	&= e^{\theta_0 x} \, \mathbf E_x
	\left[ e^{(R(Q_v(0)) - \theta_0^2/2)(t \wedge \tau_0)} e^{-v B_{t \wedge \tau_0}} \right]
	=: e^{\theta_0 x} \, \mathbf E_x
	\left[ e^{-c_0 (t \wedge \tau_0)} e^{-v B_{t \wedge \tau_0}} \right].
\end{align*}
Since $B_{t \wedge \tau_0} \ge 0$ and $c_0 > v^2/2$ by \eqref{ineq:2.13}, it follows from the dominated convergence theorem that
\begin{align}
	Q_v(x)
	 &\le e^{v x} \lim_{t \to \infty} \mathbf E_x
	\left[ e^{-c_0 (t \wedge \tau_0)} e^{-v B_{t \wedge \tau_0}} \right] \nonumber \\
	&= e^{v x} \, \mathbf E_x \left[ e^{-c_0 \tau_0} \right]
	= e^{-(\sqrt{2c_0} - v)x} =: e^{-c_1 x}.
\end{align}

Since $c_1>0$,  let $A>0$ be a fixed constant such that $e^{-c_1 A} < \theta_0^2/(2K)$. Then combining Lemmas \ref{lem1} and  \ref{le:2.1}, and the strong Markov property, 
 for any $x>A$,
\begin{align}\label{F<H}
	F_\lambda(x)
	&= 
	F_\lambda(A) \mathbf E_x^{-\theta_0} \left( \exp\left\{ \int_0^{\tau_A} R(Q_v(B_s)) \, \mathrm{d}s \right\} \right) 
	\nonumber \\
	&\le 
	F_\lambda(A)  \mathbf E_x^{-\theta_0} \left( \exp\left\{ K \int_0^{\tau_A} e^{-c_1 B_s} \, \mathrm{d}s \right\} \right)\nonumber\\
	& = 	F_\lambda(A)  \mathbf E_{x-A}^{-\theta_0} \left( \exp\left\{ Ke^{-c_1 A} \int_0^{\tau_0} e^{-c_1 B_s} \, \mathrm{d}s \right\} \right)
	 =: H(x-A).
\end{align}
It is obvious that  for any $x\geq 0$,
\[
H(x)\leq F_\lambda(A)  \mathbf E_{x}^{-\theta_0} \left( e^{Ke^{-c_1 A}  \tau_0}\right) = 
F_\lambda(A)
\exp\left\{x\left(\theta_0 -\sqrt{\theta_0^2-2Ke^{-c_1 A}}\right)\right\}<\infty. 
\]
By the strong Markov property applied at $\tau_x$,
\begin{align*}
	H(2x)
	&= 
	F_\lambda(A)\mathbf E_{2x}^{-\theta_0} \left[ \exp\left\{ Ke^{-c_1A} \int_0^{\tau_x} e^{-c_1 B_s} \, \mathrm{d}s \right\} \right]	\cdot \mathbf E_x^{-\theta_0} \left[ \exp\left\{ K e^{-c_1A} \int_0^{\tau_0} e^{-c_1 B_s} \, \mathrm{d}s \right\} \right] \\
	&=
	\mathbf E_{2x}^{-\theta_0} \left[ \exp\left\{ Ke^{-c_1A} \int_0^{\tau_x} e^{-c_1 B_s} \, \mathrm{d}s \right\} \right] H(x).
\end{align*}
On the interval $[0, \tau_x]$, we have $B_s \ge x$, so $e^{-c_1 B_s} \le e^{-c_1 x}$. 
Therefore, for any $x\geq 0$, 
\begin{align*}
	&\mathbf E_{2x}^{-\theta_0} \left[ \exp\left\{ Ke^{-c_1A} \int_0^{\tau_x} e^{-c_1 B_s} \, \mathrm{d}s \right\} \right]	\le \mathbf E_{2x}^{-\theta_0} \left[ \exp\left\{ K e^{-c_1 (x+A)} \tau_x \right\} \right] \\	&= \exp\left\{ x \left( \theta_0 - \sqrt{\theta_0^2 - 2K e^{-c_1 (x+A)}} \right) \right\} = \exp\left\{ \frac{2K xe^{-c_1(x+A)}}{\theta_0+ \sqrt{\theta_0^2 - 2K e^{-c_1 (x+A)}} }\right\}   \\	&\le \exp\left\{ \frac{2K}{\theta_0} x e^{-c_1 (x+A)} \right\}.
\end{align*}
In conclusion,
 there exists $c_2 > 0$ such that for all 
 $x\geq 0$,
\begin{equation*}
	H(2x) \le H(x) \exp\left\{ c_2 e^{-c_1 x/2} \right\}.
\end{equation*}
This together with \eqref{F<H} implies that for any $n \in \mathbb{N}$,
\begin{align*}
	F_\lambda(2^n +A) 	&\le H(2^n ) 	\le H(1) \exp\left\{ c_2 \sum_{k=0}^{n-1} e^{-c_1 2^k  / 2} \right\} \\	&\le H(1) \exp\left\{ c_2 \sum_{k=0}^{\infty} e^{-c_1 2^k  / 2} \right\}.
\end{align*}
Taking $n \to \infty$ in the above inequality implies \eqref{main eq2}.

{\bf Step 3}. In this step, we prove the main theorem. For BBM, combining \eqref{Laplace-BBM}, Lemma \ref{le:2.1}, and the results obtained in the above two steps, we have
\begin{align*}
	&\lim_{x \to -\infty} e^{-(\theta_0 - v)x} \, \mathbb{E}\left( \exp\left\{ -e^{-\lambda x} M_\infty(\lambda) \right\} \,\Big|\, M_\infty(\lambda) > 0 \right) \\
	&= 
	\lim_{x \to -\infty} e^{-(\theta_0 - v)x} Q_v(-x)=F_\lambda(\infty),
\end{align*}
which implies, upon replacing $e^{-\lambda x}$ by $\theta$,
\[
\lim_{\theta \to +\infty} \theta^{\frac{\theta_0 - v}{\lambda}} \, \mathbb{E}\left( e^{-\theta M_\infty(\lambda)} \,\Big|\, M_\infty(\lambda) > 0 \right)
=
F_\lambda(\infty).
\]
Therefore, together with \eqref{Lap-small}, this yields
\begin{align*}
	\lim_{\varepsilon \to 0+} \varepsilon^{-\frac{\theta_0 - v}{\lambda}} \, \mathbb{P}\left( M_\infty(\lambda) \leq \varepsilon \,\big|\, M_\infty(\lambda) > 0 \right)
	= 
	\frac{1}{\Gamma(1 + (\theta_0-v)/\lambda)}
	F_\lambda(\infty),
\end{align*}
which is equivalent to
\[
\lim_{\varepsilon \to 0+} \varepsilon^{-\frac{\theta_0 - v}{\lambda}} \, \mathbb{P}\left( M_\infty(\lambda) \in (0, \varepsilon) \right)
= 
\frac{1-q_c}{\Gamma(1 + (\theta_0-v)/\lambda)} 
F_\lambda(\infty).
\]

The proof for SBM is similar. Combining \eqref{Laplace-SBM}, Lemma \ref{le:2.1}, and the results obtained in the above two steps, we have
\begin{align*}
	&\lim_{x \to -\infty} e^{-(\theta_0 - v)x} \, \mathbb{E}\left( \exp\left\{ -e^{-\lambda x} M_\infty^X(\lambda) \right\} \,\Big|\, M_\infty^X(\lambda) > 0 \right) \\
	&= \frac{e^{-\lambda^*} \lambda^*}{1 - e^{-\lambda^*}} F_\lambda(\infty).
\end{align*}
Repeating the same argument as for BBM, we conclude that
\[
\lim_{\varepsilon \to 0+} \varepsilon^{-\frac{\theta_0 - v}{\lambda}} \, \mathbb{P}\left( M_\infty^X(\lambda) \in (0, \varepsilon) \right)
= \frac{e^{-\lambda^*} \lambda^*}{\Gamma(1 + 
	(\theta_0-v)/\lambda
	)} F_\lambda(\infty).
\]
This completes the proof.
\qed

\bigskip
\noindent


	\begin{singlespace}
		
	\end{singlespace}

	\vskip 0.2truein
	\vskip 0.2truein

\bigskip

\noindent{\bf Shukai Chen:}  School of Mathematics and Statistics, Fujian Normal University,
Fuzhou 350117, Fujian, P. R. China Email: {\texttt skchen@fjnu.edu.cn}

\bigskip 
\noindent{\bf Haojie Hou:}  School of Statistics and Data Science, Nankai University, Tianjin 300071, P. R. China.  Email: {\texttt houhaojie@nankai.edu.cn}

\end{document}